\newtheorem{theorem}{Theorem}[section]
 \newtheorem{lemma}[theorem]{Lemma}
 \newtheorem{remark}[theorem]{Remark}
\newtheorem{corollary}[theorem]{Corollary}
\newtheorem{proposition}[theorem]{Proposition}
\numberwithin{equation}{section}
\begin{document}

\title{A Note on Costs Minimization with Stochastic Target Constraints}
 \thanks{}
\author{  Yan Dolinsky,
Benjamin Gottesman and
    Ori Gurel-Gurevich \\
 Hebrew University
}%
\address{
 Department of Statistics, Hebrew University of Jerusalem\\
 {e.mail: yan.yolinsky@mail.huji.ac.il}
}
\address{
 Department of Statistics, Hebrew University of Jerusalem\\
 {e.mail: beni.gottesman@gmail.com}
}
\address{
 Department of Mathematics, Hebrew University of Jerusalem\\
 {e.mail: Ori.Gurel-Gurevich@mail.huji.ac.il}
}

\date{\today}
\begin{abstract}
We study the minimization of the expected costs under stochastic constraint at the terminal time.
The first and the main result says that for a power type of costs, the value function is the minimal positive solution of a second order
semi--linear ordinary differential equation (ODE). Moreover, we establish the optimal control.
In the second example we show that the case of exponential costs leads to a trivial optimal control.
\end{abstract}

\subjclass[2010]{49J15, 60H30, 93E20}
 \keywords{Optimal stochastic control, backward stochastic
differential equations }%

\maketitle
\markboth{Y.Dolinsky, B.Gottesman and  O.Gurel-Gurevich}{Costs Minimization with Stochastic Target Constraints}
\renewcommand{\theequation}{\arabic{section}.\arabic{equation}}
\pagenumbering{arabic}

\section{Introduction and Main Results}\label{sec:1}
This note was inspired by a series of papers
 which dealt with stochastic tracking problems; see, e.g.,
\cite{AFKP,AJK,AK,B1,B2,GHQ,GHS,KP,S} and the references therein.

Consider a complete probability space
$(\Omega,\mathcal F,\mathbb P)$
together with a standard one--dimensional Brownian motion
$W_t, t\geq 0$
and the Brownian filtration $\mathcal F^W_t:=\sigma\{W_u:u\leq t\}$
completed by the null sets.

For any $(T,x)\in (0,\infty)\times\mathbb R$ and a progressively measurable processes
$u=\{u_t\}_{t=0}^T$ which satisfies the integrability
condition $\int_{0}^T |u_t| dt<\infty$ a.s. we
denote
$$X^{x,u}_t:=x+\int_{0}^t u_s ds, \ \ t\in [0,T].$$

For any $(T,x,c)\in (0,\infty)\times\mathbb R^2$ let $U(T,x,c)$ be the set of all progressively measurable processes
$u=\{u_t\}_{t=0}^T$ (with the above integrability condition) which satisfy
$X^{x,u}_T\geq\mathbb I_{W_T>c}$ a.s. As usual,
we set $\mathbb I_Q=1$ if an event $Q$ occurs and $\mathbb I_Q=0$ if not.

For a given $p>1$ introduce the stochastic control problem
\begin{equation}\label{control}
v(T,x,c):=\inf_{u\in U(T,x,c)} \mathbb E \left[\int_{0}^T |u_t|^p dt\right].
\end{equation}

For a given $(T,x,c)\in (0,\infty)\times [0,1]\times\mathbb R$ we say that $u\in U(T,x,c)$ is optimal if
$\mathbb E[\int_{0}^ T |u_t|^p dt]=v(T,x,c).$
Let $U^{+}(T,x,c)\subset U(T,x,c)$ be the set of all
$u\in U(T,x,c)$ such that $u\geq 0$ $dt\otimes \mathbb P$ a.s. and
$X^{x,u}_T\leq 1$ a.s.
\begin{lemma}\label{lem.optimal}
For any $(T,x,c)\in (0,\infty)\times [0,1]\times\mathbb R$
$$v(T,x,c)=\inf_{u\in U^{+}(T,x,c)} \mathbb E \left[\int_{0}^T |u_t|^p dt\right].$$
\end{lemma}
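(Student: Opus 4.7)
The plan is to verify the inequality $\inf_{U^+} \le \inf_U$ by constructing, from any admissible $u\in U(T,x,c)$, a corresponding $\tilde u\in U^+(T,x,c)$ with cost no larger than that of $u$; the reverse inequality is immediate from the inclusion $U^+(T,x,c)\subset U(T,x,c)$.

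The construction I have in mind is the natural one: first replace $u$ by its positive part, then truncate the control at the first time the resulting trajectory reaches the value $1$. Precisely, set $u_t^+:=\max(u_t,0)$, let
\[
\tau:=\inf\Bigl\{t\in[0,T]\;:\; x+\int_0^t u_s^+\,ds\ge 1\Bigr\}\wedge T,
\]
and define $\tilde u_t:=u_t^+\,\mathbb I_{t\le\tau}$. Since the process $t\mapsto x+\int_0^t u_s^+\,ds$ is continuous and adapted, $\tau$ is a stopping time, $\tilde u$ is progressively measurable and nonnegative, and it inherits the integrability $\int_0^T\tilde u_t\,dt<\infty$ a.s.\ from $u$.

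The main work is to check that $\tilde u\in U^+(T,x,c)$. By continuity, if $\tau<T$ then $X^{x,\tilde u}_T=x+\int_0^\tau u_s^+\,ds=1$, while if $\tau=T$ then $X^{x,\tilde u}_T=x+\int_0^T u_s^+\,ds\le 1$ by definition of $\tau$. Together with $x\ge 0$ and $\tilde u\ge 0$ this gives $X^{x,\tilde u}_T\in[0,1]$. For the target constraint, in the first case $X^{x,\tilde u}_T=1\ge \mathbb I_{W_T>c}$, and in the second case
\[
X^{x,\tilde u}_T=x+\int_0^T u_s^+\,ds\;\ge\; x+\int_0^T u_s\,ds\;=\;X^{x,u}_T\;\ge\;\mathbb I_{W_T>c},
\]
which is the step where I expect to be a bit careful but which reduces to the trivial inequality $u_s^+\ge u_s$. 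Finally, the cost bound is immediate from $|\tilde u_t|\le u_t^+\le |u_t|$, giving $\mathbb E\int_0^T|\tilde u_t|^p dt\le \mathbb E\int_0^T|u_t|^p dt$, and taking the infimum over $u\in U(T,x,c)$ closes the argument.

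The only delicate point is therefore the verification that the stopping-time truncation preserves the stochastic target constraint in both cases $\tau<T$ and $\tau=T$; the positivity and upper bound come essentially for free from the construction, and the hypothesis $x\in[0,1]$ is used exactly to ensure that $\tau$ is well defined with $X^{x,\tilde u}_T\in[0,1]$.
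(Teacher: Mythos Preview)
Your argument is correct and follows exactly the same construction as the paper: take the positive part of $u$, stop it at the first hitting time of level $1$, and observe that the resulting control lies in $U^{+}(T,x,c)$ with no larger cost. Your write-up is in fact more careful than the paper's, which compresses the verification of the target constraint into the single line $X^{x,\tilde u}_T\ge 1\wedge X^{x,u}_T\ge \mathbb I_{W_T>c}$; your case split $\tau<T$ versus $\tau=T$ spells out the same reasoning.
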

\begin{proof}
Let $u\in U(T,x,c)$.
Define
$$\hat u_t:=\max(0,u_t), \ \ t\in [0,T],$$
$$\theta:=T\wedge\inf\{t: X^{x,\hat u}_T=1\},$$
and
$$\tilde u_t:= \hat u_t\mathbb I_{t<\theta}, \ \ t\in [0,T].$$
Observe that,
$$X^{x,\tilde u}_T\geq 1\wedge X^{x,u}_T\geq \mathbb{I}_{W_T>c}$$
and
$$\mathbb E\left[\int _{0}^T  |\tilde u_t|^pdt\right]\leq \mathbb E\left[\int _{0}^T  |u_t|^p dt\right].$$ This completes the proof.
\end{proof}
The following Proposition will be crucial for deriving the main results.
\begin{proposition}\label{prop2.1}
For any $(T,x,c)\in (0,\infty)\times [0,1]\times\mathbb R$
$$v(T,x,c)=\frac{(1-x)^p}{T^{p-1}} v\left(1,0,\frac{c}{\sqrt T}\right).$$
\end{proposition}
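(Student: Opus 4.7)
The approach is to combine Lemma~\ref{lem.optimal} with the Brownian scaling $\tilde W_s := W_{sT}/\sqrt T$, $s\in[0,1]$, which is a standard Brownian motion with $\mathcal F^{\tilde W}_s = \mathcal F^W_{sT}$ and satisfies $\{W_T>c\}=\{\tilde W_1>c/\sqrt T\}$. The case $x=1$ is immediate since $u\equiv 0$ is admissible and both sides vanish, so I would take $x\in[0,1)$.

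Given $u \in U^{+}(T,x,c)$, I would define the rescaled control
$$\tilde u_s := \frac{T}{1-x}\, u_{sT},\qquad s\in[0,1].$$
This $\tilde u$ is nonnegative and $\mathcal F^{\tilde W}$-progressively measurable, and a direct change of variables gives $X^{0,\tilde u}_s = (X^{x,u}_{sT}-x)/(1-x)\in[0,1]$. To check the target constraint at time $1$, I split into cases: on $\{\tilde W_1\leq c/\sqrt T\}$ the bound $X^{0,\tilde u}_1\geq 0$ is automatic, while on $\{\tilde W_1> c/\sqrt T\}=\{W_T>c\}$ the inclusion $u\in U^{+}$ forces $X^{x,u}_T=1$ (combining $X^{x,u}_T\geq 1$ with $X^{x,u}_T\leq 1$), so $X^{0,\tilde u}_1=1$. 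Thus $\tilde u\in U^{+}(1,0,c/\sqrt T)$ relative to $\tilde W$. The substitution $t=sT$ then yields the cost identity
$$\mathbb E\int_0^1 |\tilde u_s|^p\,ds \;=\; \frac{T^{p-1}}{(1-x)^p}\,\mathbb E\int_0^T |u_t|^p\,dt.$$
The assignment $u\mapsto\tilde u$ is an explicit bijection with inverse $u_t = \frac{1-x}{T}\,\tilde u_{t/T}$, so passing to the infimum on each side, applying Lemma~\ref{lem.optimal} twice, and invoking the fact that the value of the $(1,0,c/\sqrt T)$ problem driven by the standard Brownian motion $\tilde W$ coincides with $v(1,0,c/\sqrt T)$ (Brownian scale-invariance/distributional identity) would close the argument.

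The only real subtlety is that the threshold constraint $X^{x,u}_T\geq \mathbb I_{W_T>c}$ is not homogeneous in $X$ or in $c$, so a naive affine rescaling of the trajectory need not preserve it. The way around this is to pin $X^{x,u}_T$ to equal $1$ on $\{W_T>c\}$ rather than merely to exceed it, and this is exactly what the reduction to $U^{+}$ in Lemma~\ref{lem.optimal} buys us; it is the step where that lemma is used in an essential way.
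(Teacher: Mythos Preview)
Your proposal is correct and follows essentially the same route as the paper: reduce to $U^{+}$ via Lemma~\ref{lem.optimal}, apply the Brownian scaling $W_{sT}/\sqrt T$, and set up the bijection $u_t=\frac{1-x}{T}\tilde u_{t/T}$ between $U^{+}(T,x,c)$ and $U^{+}(1,0,c/\sqrt T)$ with the cost identity. The paper's version is slightly terser (it packages the rescaled admissible class directly as the set $\tilde U$ of nonnegative $\mathcal F^B$-progressive processes with $\mathbb I_{B_1>c/\sqrt T}\le\int_0^1\tilde u\,dt\le1$), but the substance is identical.
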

\begin{proof}
The statement is obvious for $x=1$. Thus assume that $x<1$.

We use the scaling property of Brownian motion.
Define the Brownian motion
$B_t:=\frac{W_{t T}}{\sqrt T}$, $t\geq 0$. Let
$\mathcal F^B_t:=\sigma\{B_u:u\leq t\}$
be the filtration generated by $B$ completed with the null sets. Clearly,
$\mathcal F^B_{t}=\mathcal F^W_{t T}$, $t\geq 0$.
Let $\tilde U$
be the set of all stochastic processes $\tilde u=\{\tilde u_t\}_{t=0}^1$ which are
non negative, progressively measurable with respect to $\mathcal F^B$ and satisfy
$$\mathbb I_{B_1>\frac{c}{\sqrt T}}\leq \int_{0}^1 \tilde u_t dt \leq 1.$$
We notice that there is a bijection $U^{+}(T,x,c)\leftrightarrow\tilde U$
 which is given by
 $$ u_t=\frac{(1-x)\tilde u_{\frac{t}{T}}}{T}, \  t\in [0,T].$$
Thus, from Lemma \ref{lem.optimal}
\begin{eqnarray*}
&v(T,x,c)=\min_{ u\in U^{+}(T,x,c)} \mathbb E\left[\int_{0}^T u^p_t dt\right]\\
&=\min_{\tilde u\in \tilde U} \frac{(1-x)^p}{T^{p-1}}\mathbb E\left[\int_{0}^1 \tilde u^p_t dt\right]=
\frac{(1-x)^p}{T^{p-1}}v\left(1,0,\frac{c}{\sqrt T}\right).
\end{eqnarray*}
\end{proof}
Next, let $\Phi(\cdot)=\frac{1}{\sqrt 2\pi}\int_{-\infty}^{\cdot} e^{-\frac{y^2}{2}} dy$ be the
 cumulative distribution function of the standard normal distribution.
For any $T>0$ and $c\in\mathbb R$ consider the martingale $\{M^{T,c}_t\}_{t=0}^T$ given by
\begin{equation}\label{martingale}
M^{T,c}_T=\mathbb I_{W_T<c}, \ \ M^{T,c}_t=\mathbb P(W_T<c|\mathcal F^W_t)=\Phi\left(\frac{c-W_t}{\sqrt{T-t}}\right), \ t\in [0,T).
\end{equation}

Define the function $g:(0,1)\rightarrow\mathbb R_{+}$ by
 $$g(z)=v\left(1,0,\Phi^{-1}(z)\right)$$ where $\Phi^{-1}$ is the inverse function.
From Proposition \ref{prop2.1} we have
\begin{equation}\label{main}
 v(t,x,c)= \frac{(1-x)^p}{t^{p-1}} g\left(\Phi\left(\frac{c}{\sqrt t}\right)\right)=
 \frac{(1-x)^p}{t^{p-1}} g(M^{t,c}_0), \ \ \forall (t,x,c)\in (0,\infty)\times [0,1]\times\mathbb R.
 \end{equation}

Now, we are ready to state the main results which will be proved in Section \ref{sec:4}.
 \begin{theorem}\label{thm2.1}
 ${}$\\
 (I) Let $h:(0,1)\rightarrow \mathbb R_{+}$ be given by
 $h(y)=\frac{\exp\left(-[\Phi^{-1}(y)]^2\right)}{4\pi}.$
The function $g:(0,1)\rightarrow\mathbb R_{+}$ is a non increasing solution of the ODE
 \begin{equation}\label{ode}
 h(y)g''(y)+(p-1)\left(g(y)-g^{\frac{p}{p-1}}(y)\right)=0, \ \ y\in (0,1)
 \end{equation}
 with the boundary conditions
 \begin{equation}\label{boundary}
 \lim_{y\rightarrow 0} g(y)=1 \ \mbox{and} \ \lim_{y\rightarrow 1}g(y)=0.
 \end{equation}
 Moreover, the following minimality holds. If $\hat g:(0,1)\rightarrow\mathbb R_{+}$ is another solution
 to (\ref{ode}) ($\hat g(y_0)\neq g(y_0)$ for some $y_0$) and satisfies
 \begin{equation}\label{minimal}
\lim_{y\rightarrow 0} \hat g(y)>0
\end{equation}
 then $\hat g(y)>g(y)$ for all $y\in (0,1)$.\\
(II)
Let $(T,x,c)\in (0,\infty)\times [0,1]\times\mathbb R$.
The optimal control is given by
$$\hat u_t=(1-x)\frac{g^{\frac{1}{p-1}}(M^{T,c}_t)}{T-t}\exp\left(-\int_{0}^t \frac{g^{\frac{1}{p-1}}(M^{T,c}_s)}{T-s}ds\right), \ \ \ t\in [0,T).$$
Namely, for the optimal control we have the ODE:
$$\frac{d X^{x,\hat u}_t}{dt}=g^{\frac{1}{p-1}}(M^{T,c}_t)\frac{1-X^{x,\hat u}_t}{T-t}, \  \ \ t\in [0,T). $$
(III) Let $T>0$ and $c\in\mathbb R$. Then the pair
$(Y,Z)$ given by
$$Y_t:= \frac{g(M^{T,c}_t)}{(T-t)^{p-1}}, \ \ Z_t:=-\frac{g'(M^{T,c}_t)e^{-\frac{(c-W_t)^2}{2(T-t)}}}{\sqrt{2\pi(T-t)^{2p-1}}}, \ \  t\in [0,T)$$
 is the minimal solution of the backward stochastic
differential equation (BSDE)
\begin{equation}\label{BSDE}
dY_t=(p-1)Y^{\frac{p}{p-1}}_t dt+Z_t dW_t, \ \ t\in [0,T)
\end{equation}
with the singular terminal condition $Y_T=\infty \mathbb I_{W_T>c}$. This terminal condition means that $\lim_{t\rightarrow T}Y_t=\infty \mathbb I_{W_T>c}$ a.s.
where we use the convention $\infty \cdot 0:=0$.
\end{theorem}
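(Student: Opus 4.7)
The plan is to derive (I), (II), and (III) simultaneously from a single BSDE ansatz together with a Young-inequality verification argument: (III) is essentially an Itô computation, (II) comes from pointwise optimization inside the verification drift, and (I) is the ODE that the coefficient $g$ must satisfy in order for the ansatz to close self-consistently.

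First I would postulate $Y_t:=g(M^{T,c}_t)/(T-t)^{p-1}$ for an unknown smooth $g:(0,1)\to\mathbb R_+$ and compute its Itô decomposition. Since $M^{T,c}_t=\Phi((c-W_t)/\sqrt{T-t})$ is a Brownian martingale with $d\langle M^{T,c}\rangle_t=\phi((c-W_t)/\sqrt{T-t})^2/(T-t)\,dt$, Itô's formula gives
\begin{equation*}
dY_t=\frac{(p-1)\,g(M^{T,c}_t)+h(M^{T,c}_t)\,g''(M^{T,c}_t)}{(T-t)^p}\,dt+Z_t\,dW_t
\end{equation*}
with $Z_t$ exactly the expression stated in (III). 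Matching the drift against $(p-1)Y_t^{p/(p-1)}=(p-1)g^{p/(p-1)}(M^{T,c}_t)/(T-t)^p$ forces the ODE (\ref{ode}); conversely any $C^2$ $g$ satisfying (\ref{ode}) produces a pair solving (\ref{BSDE}). The boundary conditions (\ref{boundary}) are then read off the extremes: as $y\to 0^+$ the constraint degenerates to $X_T\geq 1$ a.s., whose cheapest realisation on $[0,1]$ is $u\equiv 1$ with cost $1$; as $y\to 1^-$ the constraint is vacuous and $u\equiv 0$ has cost $0$.

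For the verification step I would fix any positive $C^2$ solution $\hat g$ of (\ref{ode}), set $\hat Y_t:=\hat g(M^{T,c}_t)/(T-t)^{p-1}$, and for arbitrary $u\in U^{+}(T,x,c)$ consider
\begin{equation*}
F^u_t:=(1-X^{x,u}_t)^p\,\hat Y_t+\int_0^t u_s^p\,ds,\qquad t\in[0,T).
\end{equation*}
Because $X^{x,u}$ has finite variation, Itô's product rule combined with the BSDE for $\hat Y$ produces the drift
\begin{equation*}
(1-X^{x,u}_t)^p(p-1)\,\hat Y_t^{p/(p-1)}-p\,u_t(1-X^{x,u}_t)^{p-1}\,\hat Y_t+u_t^p,
\end{equation*}
a Young-type trinomial that is $\geq 0$ for every $u_t\geq 0$ and vanishes precisely at the feedback $u_t=(1-X^{x,u}_t)\,\hat Y_t^{1/(p-1)}$. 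Hence $F^u$ is a local submartingale for every admissible $u$ and a local martingale at $\hat u_t=(1-X^{x,\hat u}_t)\,\hat g^{1/(p-1)}(M^{T,c}_t)/(T-t)$; integrating the resulting linear ODE $\dot X^{x,\hat u}_t=\hat u_t$ yields the closed form in (II). Taking expectation and letting $t\to T^-$, the constraint $X^{x,u}_T\geq\mathbb I_{W_T>c}$ together with the singular terminal condition $\hat Y_T=\infty\cdot\mathbb I_{W_T>c}$ and the convention $\infty\cdot 0=0$ kills $(1-X^{x,u}_T)^p\hat Y_T$ on $\{W_T>c\}$, and for the minimal choice $\hat g=g$ the fine decay of $g$ near $y=1^-$ kills it on $\{W_T<c\}$ as well. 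This yields $\mathbb E[\int_0^T u_t^p\,dt]\geq(1-x)^p Y_0$ with equality at $\hat u$, which together with the scaling identity (\ref{main}) and Proposition~\ref{prop2.1} identifies $v(T,x,c)$ and establishes (II) and (III). For the minimality in (I), any alternative positive solution $\hat g$ with $\hat g(0^+)>0$ still produces an admissible feedback $\hat u$ (because on $\{W_T>c\}$ the defining ODE forces $\int_0^T\hat g^{1/(p-1)}(M^{T,c}_s)/(T-s)\,ds=\infty$ and hence $X^{x,\hat u}_T=1$), and the martingale property of $F^{\hat u}$ together with the nonnegativity of the terminal penalty gives $\mathbb E[\int_0^T\hat u_t^p\,dt]\leq(1-x)^p\hat g(M^{T,c}_0)/T^{p-1}$, so that $g\leq\hat g$; pointwise strict inequality when $\hat g\not\equiv g$ follows from a maximum-principle argument applied to the linearised ODE satisfied by $\hat g-g$, since a zero of $\hat g-g$ at an interior point would simultaneously be a critical point and hence force $\hat g\equiv g$ by Picard--Lindel\"of.

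The principal technical obstacle is the passage $t\to T^-$ in the verification argument: $\hat Y_t$ blows up on $\{W_T>c\}$ while $1-X^{x,u}_t$ vanishes there at an a priori unknown rate, and on $\{W_T<c\}$ the limiting behaviour depends on the fine asymptotic decay of $\hat g$ at $y=1^-$ (which for the minimal $g$ must be Gaussian in $\Phi^{-1}(y)$ rather than algebraic, in order that $(1-X^{x,u}_T)^p Y_T$ vanish identically). I would handle this by localising at stopping times $\tau_n:=T\wedge\inf\{t<T:\hat Y_t\geq n\}$, running the verification on $[0,\tau_n]$ where all quantities are bounded, and then monotone-converging $n\to\infty$ using nonnegativity of $F^u$ and integrability of $\int_0^T u_s^p\,ds$ along optimising sequences. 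A further prerequisite is existence of a $C^2$ positive solution of (\ref{ode})--(\ref{boundary}), which should follow from a monotone approximation / shooting argument on $[\varepsilon,1-\varepsilon]$, exploiting that $h>0$ makes (\ref{ode}) uniformly non-singular on each such compact.
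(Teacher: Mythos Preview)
Your verification-via-Young-inequality approach is a coherent alternative to the paper's argument, but there is a genuine logical gap in how you treat the function $g$. In the paper, $g$ is \emph{defined} as $g(z)=v(1,0,\Phi^{-1}(z))$; the content of part (I) is that this particular function---which a~priori has no smoothness at all---is $C^2$ and solves the ODE. Your proof \emph{assumes} a smooth positive solution of~(\ref{ode})--(\ref{boundary}) exists, runs the verification, and relegates existence to a ``prerequisite'' handled by a vaguely sketched shooting argument. Since $h$ vanishes at both endpoints, the ODE is singular there and a shooting argument on $[\varepsilon,1-\varepsilon]$ does not obviously produce a global solution with the correct boundary behaviour; this is exactly the hard part, and you have not supplied it. The paper avoids this entirely: it first proves (Lemma~\ref{lem.convex}) that the value-function $g$ is concave, then uses the Doob--Meyer decomposition of the supermartingale $g(M_t)$ together with the BSDE dynamics coming from the Kruse--Popier minimal-supersolution theory (Corollary~\ref{co}) to identify the compensator, and finally a comparison with an explicit $C^2$ test function $f$ via optional stopping shows $g-f$ is linear, hence $g''=f''$ pointwise. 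Regularity of $g$ is thus \emph{derived}, not assumed.

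A secondary gap is the terminal passage $t\to T^-$. You correctly identify this as the principal obstacle, but your proposed localisation plus monotone convergence does not resolve the direction you actually need: for the lower bound $\mathbb E[\int_0^T u^p_t\,dt]\geq (1-x)^p Y_0$ you must show $\mathbb E[(1-X^{x,u}_t)^p Y_t]\to 0$ as $t\uparrow T$, which on $\{W_T<c\}$ requires knowing $g(y)=o((1-y)^?)$ as $y\to 1$ at a rate you have not established. The paper handles the analogous issue (that the minimal supersolution is a genuine solution with the singular terminal condition) by a density argument: it dominates $\mathbb I_{W_T>c}$ by continuous functions $\phi^{(n)}(W_T)$ to which Popier's Theorem~4 applies, then passes to the limit. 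Your minimality argument in (I), by contrast, is essentially the same as the paper's (comparison with any other supersolution, then strict inequality via Picard--Lindel\"of at an interior touching point).
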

\begin{remark}\label{uni}
It is easy to see that the optimal control is unique. Indeed, if by contradiction $u,\tilde u\in U(T,x,c)$ are optimal controls
and $dt\otimes \mathbb P(u\neq \tilde u)>0$. Then, the process $\frac{u+\tilde u}{2}$ satisfies $\frac{u+\tilde u}{2}\in U(T,x,c)$ and
from the strict convexity of the function $z\rightarrow |z|^p$ we have
$$\mathbb E\left[\int_{0}^T\left|\frac{u_t+\tilde u_t}{2}\right|^ pdt\right]<
\frac{1}{2}\left(\mathbb E\left[\int_{0}^ T |u_t|^p dt\right]+\mathbb E\left[\int_{0}^ T |\tilde u_t|^p dt\right]\right)=v(T,x,c)
$$
which is a contradiction.
\end{remark}
\begin{remark}
A natural question is whether there exists a unique positive, non increasing solution to the
ODE (\ref{ode}) with the boundary conditions
(\ref{boundary}).
Due to the fact that $h$ takes the value $0$ at the end points $\{0,1\}$ the uniqueness seems to be far from obvious
and we leave it for future research.
\end{remark}

\section{Proof of the Main Results}\label{sec:4}
We start with the following regularity result.
\begin{lemma}\label{lem.convex}
The function $g:(0,1)\rightarrow \mathbb R_{+}$ is concave, non increasing and satisfies
 $\lim_{y\rightarrow 0} g(y)=1$.
  \end{lemma}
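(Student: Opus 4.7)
The plan is to handle the three assertions of Lemma~\ref{lem.convex} separately: the non-increasing property and the limit $\lim_{y\to 0}g(y)=1$ follow almost directly from the definition of $v$ together with Proposition~\ref{prop2.1}, whereas concavity is the substantive part and will be obtained by a dynamic programming / martingale coupling argument at a well-chosen stopping time.

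For monotonicity, since $\Phi^{-1}$ is strictly increasing it suffices to check that $c\mapsto v(1,0,c)$ is non-increasing, and this is immediate from $c_1\le c_2\Rightarrow\mathbb I_{W_1>c_2}\le\mathbb I_{W_1>c_1}\Rightarrow U(1,0,c_1)\subset U(1,0,c_2)$. For the limit, the constant control $u\equiv 1$ lies in $U^{+}(1,0,c)$ with cost $1$, giving $g(y)\le 1$. In the other direction, for any $u\in U^{+}(1,0,c)$ Jensen's inequality applied to $x\mapsto x^p$ yields $\int_0^1 u_t^p\,dt\ge\bigl(\int_0^1 u_t\,dt\bigr)^p\ge\mathbb I_{W_1>c}$ pathwise (on $\{W_1>c\}$ we have $\int u\ge 1$, elsewhere both sides vanish), so Lemma~\ref{lem.optimal} gives $g(y)\ge 1-\Phi(\Phi^{-1}(y))=1-y$. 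Combining with $g(y)\le 1$ gives the limit.

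For concavity, fix $z_1<z_2$ in $(0,1)$, $\lambda\in(0,1)$, and set $\bar z=\lambda z_1+(1-\lambda)z_2$, $\bar c=\Phi^{-1}(\bar z)$. Introduce the martingale $M_t=\Phi\bigl((\bar c-W_t)/\sqrt{1-t}\bigr)$ from \eqref{martingale}, which satisfies $M_0=\bar z$, and the stopping time $\tau:=\inf\{t\in[0,1]:M_t\in\{z_1,z_2\}\}$. Continuity of $M$ together with $M_1\in\{0,1\}\subset[z_1,z_2]^c$ force $\tau<1$ a.s.; optional stopping for the bounded martingale $M$ then yields $P(M_\tau=z_1)=\lambda$, $P(M_\tau=z_2)=1-\lambda$, whence $E[g(M_\tau)]=\lambda g(z_1)+(1-\lambda)g(z_2)$. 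Given any $u\in U^{+}(1,0,\bar c)$, split the cost at $\tau$: on $[0,\tau]$ the power-mean inequality gives the pathwise bound $\int_0^\tau u_t^p\,dt\ge X_\tau^p/\tau^{p-1}$, while on $[\tau,1]$ the strong Markov property combined with Proposition~\ref{prop2.1} yields
\[
E\Bigl[\int_\tau^1 u_t^p\,dt\,\big|\,\mathcal F_\tau\Bigr]\ge v(1-\tau,X_\tau,\bar c-W_\tau)=\frac{(1-X_\tau)^p}{(1-\tau)^{p-1}}\,g(M_\tau).
\]
The proof is then reduced to the elementary algebraic inequality
\[
\frac{a^p}{\tau^{p-1}}+\frac{(1-a)^p b}{(1-\tau)^{p-1}}\ge b\qquad\text{for all }a\in[0,1],\ \tau\in(0,1),\ b\in[0,1],
\]
which one checks by minimizing the (strictly convex) left-hand side in $a$, computing the critical value at $a^*=\tau b^{1/(p-1)}/(1-\tau+\tau b^{1/(p-1)})$ and using $b\le 1$. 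This is precisely where the upper bound $g\le 1$ just established is essential. Applying the inequality pointwise with $b=g(M_\tau)\le 1$ and taking expectations gives $E[\int_0^1 u_t^p\,dt]\ge E[g(M_\tau)]=\lambda g(z_1)+(1-\lambda)g(z_2)$, and the infimum over $u$ finally yields $g(\bar z)\ge\lambda g(z_1)+(1-\lambda)g(z_2)$.

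The main obstacle I anticipate is the rigorous justification of the dynamic programming inequality $E[\int_\tau^1 u_t^p\,dt\mid\mathcal F_\tau]\ge v(1-\tau,X_\tau,\bar c-W_\tau)$ for a generic (non-optimal, possibly non-Markovian) $u$: one must verify that after the time shift and the spatial rescaling employed in Proposition~\ref{prop2.1}, the restriction of $u$ to $[\tau,1]$ becomes an admissible control for the shifted problem driven by the independent Brownian motion $(W_{\tau+s}-W_\tau)_{s\ge 0}$, so that its conditional cost is bounded below by the corresponding value function. Once this step is in place, the remainder of the concavity argument is just the elementary minimisation described above.
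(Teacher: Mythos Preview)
Your proposal is correct and follows essentially the same route as the paper: monotonicity is immediate, the limit at $0$ comes from sandwiching $g$ between $1$ and a quantity tending to $1$ (the paper uses $(1-y)^p$ via Jensen on the expectation, you use $1-y$ via pathwise Jensen, both work), and concavity is obtained exactly as you outline---run the martingale $M$ from the intermediate point until it hits $\{z_1,z_2\}$, apply H\"older on $[0,\tau]$, the dynamic programming inequality on $[\tau,1]$ via Proposition~\ref{prop2.1} and the strong Markov property, and combine with the elementary bound $\frac{a^p}{\tau^{p-1}}+\frac{(1-a)^p}{(1-\tau)^{p-1}}\ge 1$ together with $g\le 1$. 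The paper states the DPP step tersely (appealing to the independence of the post-$\tau$ Brownian increments and the fact that the restricted control is admissible in the shifted problem since $D\in[0,1]$), which is precisely the verification you flagged as the only delicate point.
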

\begin{proof}
The fact that $g$ is non increasing is obvious.

Next, we establish the equality $\lim_{y\rightarrow 0} g(y)=1$.
From the Jensen inequality it follows that for any $u\in U(1,0,c)$
$$\mathbb E\left[\int_{0}^1 |u_t|^p dt\right]\geq (\mathbb P(W_1>c))^p=(1-\Phi(c))^p.$$ Thus,
$g(y)=v(1,0,\Phi^{-1}(y))\geq (1-y)^p$ and we conclude that $\lim_{y\rightarrow 0} g(y)=1$.

It remains to prove concavity.
Fix $a_1<a<a_2$. Let us show that
\begin{equation*}
g(a)\geq g(a_1)\frac{a_2-a}{a_2-a_1}+g(a_2)\frac{a-a_1}{a_2-a_1}.
\end{equation*}
Let $c=\Phi^{-1}(a)$. Choose $\epsilon>0$. There exists
$u\in U^{+}(1,0,c)$ such that
\begin{equation}\label{3.2}
g(a)>\mathbb E \left[\int_{0}^1 |u_t|^p dt\right]-\epsilon.
\end{equation}

Consider the martingale $M:=M^{1,c}$  given by (\ref{martingale}). Observe that $M_0=a$. Define the stopping time
$$\tau=\inf\{t: M_t\notin(a_1,a_2)\}.$$
Clearly, $\tau<1$ a.s.
and so from the equality $\mathbb E[M_{\tau}]=M_0$ we conclude that
\begin{equation}\label{3.3-}
\mathbb P(M_{\tau}=a_1)=\frac{a_2-a}{a_2-a_1} \  \mbox{and} \ P(M_{\tau}=a_2)=\frac{a-a_1}{a_2-a_1}.
\end{equation}
Next, let
$D=\int_{0}^{\tau} u_t dt$.
From the Holder inequality
\begin{equation}\label{3.-}
\int_{0}^\tau |u_t|^p dt\geq \frac{D^p}{\tau^{p-1}} \ \ \mbox{a.s.}
\end{equation}
From (\ref{main}),
the fact that $\{W_{s+\tau}-W_{\tau}\}_{s=0}^\infty$ is a Brownian motion independent of $\mathcal F^W_{\tau}$, and the inequality
$D+\int_{\tau}^{1}u_t dt\geq \mathbb I_{W_1-W_{\tau}>c-W_{\tau}}$ (notice that $D\in [0,1]$) we get
\begin{eqnarray*}
&\mathbb E \left[\int_{\tau}^1 |u_t|^p dt\big|\mathcal F^W_{\tau}\right]\geq v\left(1-\tau,D,c-W_{\tau}\right)\\
&=\frac{\left(1-D\right)^{p}}{(1-\tau)^{p-1}} g\left(\Phi\left(\frac{c-W_{\tau}}{\sqrt{1-\tau}}\right)\right)=
\frac{\left(1-D\right)^{p}}{(1-\tau)^{p-1}} g(M_{\tau}).
\end{eqnarray*}
Thus,
\begin{equation}\label{3.3}
\mathbb E \left[\int_{\tau}^1 |u_t|^p dt\right]\geq \mathbb E\left[\frac{\left(1-D\right)^{p}}{(1-\tau)^{p-1}} g(M_{\tau})\right].
\end{equation}
By combining (\ref{3.2})--(\ref{3.3}), the fact that $g\leq 1$ and the simple inequality
$\frac{z^p}{y^{p-1}}+\frac{(1-z)^p}{(1-y)^{p-1}}\geq 1$ for $0<y,z<1$ we obtain
 \begin{eqnarray*}
&g(a)+\epsilon>\mathbb E\left[\int_{0}^1 |u_t|^p dt\right]\geq \mathbb E
\left[\left(\frac{D^p}{\tau^{p-1}}+\frac{\left(1-D\right)^{p}}{(1-\tau)^{p-1}} \right) g(M_{\tau})\right]\\
&\geq\mathbb E[g(M_{\tau})]=g(a_1)\frac{a_2-a}{a_2-a_1}+g(a_2)\frac{a-a_1}{a_2-a_1}.
\end{eqnarray*}
Since $\epsilon>0$ was arbitrary we complete the proof.
\end{proof}

The proof of the main results will be based on the theory developed in \cite{KP}.
We start with preparations.
For any $(T,x,c)\in (0,\infty)\times [0,1]\times\mathbb R$
introduce the optimal position targeting problem
\begin{equation*}
\hat v(T,x,c):=\inf_{u}\mathbb E \left[\int_{0}^T | u_t|^p dt+\xi |X^{x,u}_T|^p\right]
\end{equation*}
where the infimum is taken over all progressively measurable processes $u=\{u_t\}_{t=0}^T$,
$\xi:=\infty\mathbb I_{W_T>c}$ and as before, we use the convention
$\infty\cdot 0:=0$.

Using same arguments as in Lemma \ref{lem.optimal} gives that
$$\hat v(T,x,c)= \inf_{u\in \hat U^{-}(T,x,c)}\mathbb E \left[\int_{0}^T | u_t|^p dt+\xi |X^{x,u}_T|^p\right]$$
where
$\hat U^{-}(T,x,c)$ is the set of all progressively measurable processes
$u=\{u_t\}_{t=0}^T$ such that $u\leq 0$ $dt\otimes \mathbb P$ a.s.,
$X^{x,u}_T\geq 0$ a.s. and $X^{x,u}_T=0$ on the event $\{W_T>c\}$.

Clearly, there is a bijection
$U^{+}(T,x,c)\leftrightarrow \hat U^{-}(T,1-x,c)$ given by $u\leftrightarrow -u$. Moreover, for any $u\in U^{-}(T,1-x,c)$ we have
\begin{equation}\label{lin}
\mathbb E \left[\int_{0}^T |-u_t|^p dt\right]=\mathbb E \left[\int_{0}^T |u_t|^p dt+\xi |X^{1-x,u}_T|^p\right].
\end{equation}
Thus, from Lemma \ref{lem.optimal} we conclude that
\begin{equation}\label{link}
v(t,x,c)=\hat v(t,1-x,c), \ \ \ \forall (t,x,c)\in (0,\infty)\times [0,1]\times\mathbb R.
\end{equation}

This brings us to the following corollary.
\begin{corollary}\label{co}
Let $T>0$ and $c\in\mathbb R$.
There exists a progressively measurable process $\{Z_t\}_{0\leq t<T}$ such that
the pair $\left(\frac{g(M^{T,c}_t)}{(T-t)^{p-1}},Z_t\right)_{0\leq t<T}$
is the minimal supersolution (see Definition 1 in \cite{KP}) to the BSDE given by (\ref{BSDE}) with the singular terminal condition
$Y_T=\infty\mathbb I_{W_T>c}$.
\end{corollary}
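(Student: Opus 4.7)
The plan is to deduce this from the general existence-and-minimality theory for BSDEs with singular terminal conditions developed in \cite{KP}, applied to the position targeting reformulation captured by (\ref{link}). The $Y$-component is forced by the explicit value function already provided by Proposition \ref{prop2.1}, while the $Z$-component will be extracted from the martingale part of $Y$ via Brownian representation.

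More concretely, the first step is to upgrade (\ref{link}) and (\ref{main}) from a statement about the value at time $0$ to a dynamic statement on $[t,T]$. Fixing $T$ and $c$, and invoking the Markov property of Brownian motion together with the scaling argument from the proof of Proposition \ref{prop2.1}, the dynamic value of the position targeting problem starting at time $t$ from position $X_t=x$ with the same singular terminal penalty is
$$\hat v_t(x)=\frac{x^p}{(T-t)^{p-1}}\,g(M^{T,c}_t),\qquad t\in[0,T).$$
Specializing to $x=1$ produces exactly the process $Y_t=g(M^{T,c}_t)/(T-t)^{p-1}$ appearing in the statement.

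The second step is to invoke the main theorem of \cite{KP}, which asserts that for a position targeting problem of this type the dynamic value process is the $Y$-component of the minimal supersolution of the associated BSDE with the prescribed singular terminal condition, and that the companion process $Z$ is supplied by the Brownian martingale representation theorem. Applied in our setting, the driver is $(p-1)y^{p/(p-1)}$ and the terminal condition is $Y_T=\infty\mathbb{I}_{W_T>c}$, which matches (\ref{BSDE}) exactly. The $Z_t$ obtained in this way is by construction the process whose explicit form is then recognized, via It\^{o}'s formula applied to $g(M^{T,c}_t)/(T-t)^{p-1}$, in part (III) of Theorem \ref{thm2.1}.

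The step I expect to require the most care is the bookkeeping of verifying that the data of our problem meet the hypotheses of \cite{KP}: the terminal random variable $\xi=\infty\mathbb{I}_{W_T>c}$ is $\mathcal F^W_T$-measurable with $\mathbb{P}(\xi<\infty)=\mathbb{P}(W_T\le c)>0$; the driver $y\mapsto(p-1)y^{p/(p-1)}$ is nonnegative, convex, monotone on $\mathbb{R}_+$, and of the polynomial growth exponent required by their framework; and the admissible class $\hat U^{-}(T,x,c)$ fits their setup through the sign-reversal correspondence $u\leftrightarrow -u$ used to derive (\ref{lin}). These are routine checks, and once they are in place the corollary is an immediate application of the cited theorem to our explicit value function.
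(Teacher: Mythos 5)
Your proposal is correct and follows essentially the same route as the paper: existence of the minimal supersolution comes from Theorem 3 in \cite{KP}, and the identification $Y_t=g(M^{T,c}_t)/(T-t)^{p-1}$ comes from combining that theorem's representation of the dynamic value of the position targeting problem with the Markov/scaling property, (\ref{main}) and (\ref{link}). Your additional care not to claim the explicit form of $Z$ at this stage (since $g$ is not yet known to be $C^1$, It\^{o}'s formula is unavailable until part (III)) matches the paper's own remark following the corollary.
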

\begin{proof}
 From Theorem 3 in \cite{KP} it follows that there exists a minimal supersoution $(Y,Z)$ to the above BSDE. Moreover, by combining Theorem
 3 in \cite{KP} together with
the Markov property of Brownian motion,
(\ref{main}) and (\ref{link}) we obtain that
$Y_t= \frac{g(M^{T,c}_t)}{(T-t)^{p-1}}$, $t\in [0,T)$.
 \end{proof}
\begin{remark}
A priori we do not know that $g$ is continuously differentiable and so we can not apply the Ito formula and find $Z$.
In the proof of Theorem \ref{thm2.1} we will show that $g$ satisfies the ODE (\ref{ode}) and then we will find $Z$.
\end{remark}
Now, we are ready to prove Theorem \ref{thm2.1}.
\begin{proof}
Proof of Theorem \ref{thm2.1}.\\
${}$\\
\textbf{First step: Proving that the minimal supersolution is a solution.}\\
Fix $T>0$ and $c\in\mathbb R$. Let $\xi:=\infty \mathbb I_{W_T>c}$.
Let us show that the supersolution $(Y,Z)$ from Corollary \ref{co} is actually a solution. To that end,
we need to establish the
inequality
$\lim\sup_{t\rightarrow T}Y_t\leq\xi$.

We wish to apply Theorem 4 in \cite{P}. There is a technical problem that the indicator function is not continuous
and so condition (4) in \cite{P} does not hold. Still, this issue can be simply solved by the following density argument.
Define a sequence of functions $\phi^{(n)}:\mathbb R\rightarrow\mathbb R\cup\{\infty\}$, $n\in\mathbb N$ by
$$\phi^{(n)}(z)=\left(\frac{1}{c-z}-n\right)\mathbb  I_{c-\frac{1}{n}\leq z<c}+\infty \mathbb  I_{z\geq c}.$$
Observe that for any $n$, $\phi^{(n)}$ satisfies condition (4) in \cite{P}. Hence,
from Theorem 4 in \cite{P} there exists a pair $(Y^{(n)},Z^{(n)})$ which satisfies the BSDE (\ref{BSDE})
with the terminal constraint $Y^{(n)}_T=\phi^{(n)}(W_T)$.
Since $\phi^{(n)}(W_T)\geq \xi $, then from the minimality property of $(Y,Z)$ we obtain that for any $n$,
$Y_t\leq Y^{(n)}_t$ a.s. for any $t\in [0,T)$.
Thus,
$$\lim\sup_{t\rightarrow T}Y_t\leq
\lim\inf_{n\rightarrow\infty} Y^{(n)}_T=\lim_{n\rightarrow\infty} \phi^{(n)}(W_T)=\xi, $$
as required.  \\
${}$\\
\textbf{Second step: Establishing statement (I) in Theorem \ref{thm2.1}.}\\
From Corollary \ref{co} and the previous step it follows that
$\lim_{t\rightarrow 1}\frac{g(M^{1,0}_t)}{(1-t)^{p-1}}=0$ on the event
$\{W_1<0\}$. Clearly,
 $\lim_{t\rightarrow  1} M^{1,0}_t=1$ on the event $\{W_1<0\}$, and so we conclude that
 $\lim_{y\rightarrow 1}g(y)=0.$ This together with the boundary condition
 $\lim_{y\rightarrow 0}g(y)=1$ (was established in Lemma \ref{lem.convex})
 gives (\ref{boundary}).

 Next, we prove (\ref{ode}).
Extend the function $g$ to the closed interval $[0,1]$ by
$g(0):=1$
and $g(1):=0$.
Choose $a\in (0,1)$. Let $c=\Phi^{-1}\left(\frac{a}{2}\right)$.
Consider the martingale $M:=M^{1,c}$.
From Lemma \ref{lem.convex} it follows that
$g:[0,1]\rightarrow [0,1]$ is concave and continuous. Thus, $g(M_t)$, $t\in [0,1]$ is a continuous and uniformly integrable
super--martingale. From Doob's decomposition
$$g(M_t)=N_t-A_t, \ \ t\in [0,1]$$
where $N=\{N_t\}_{t=0}^1$ is a martingale and $A=\{A_t\}_{t=0}^1$ is a continuous,
non decreasing process with
$A_0=0$.

Recall the minimal supersolution $\left(\frac{g(M_t)}{(1-t)^{p-1}},Z_t\right)_{0\leq t<1}$
from Corollary \ref{co}.
From the product rule and (\ref{BSDE}) we get
\begin{eqnarray*}
&(p-1)\frac{g^{\frac{p}{p-1}}(M_t)}{(1-t)^p} dt+Z_t dW_t=d\left(\frac{g(M_t)}{(1-t)^{p-1}}\right)\\
&=\frac{dN_t}{(1-t)^{p-1}}-\frac{dA_t}{(1-t)^{p-1}}+(p-1)\frac{g(M_t)}{(1-t)^{p}}dt.
\end{eqnarray*}
Hence,
$$\frac{dA_t}{dt}=(p-1)\frac{
g(M_t)-g^{\frac{p}{p-1}}(M_t)}{1-t}.$$
We conclude that,
\begin{equation}\label{4.new}
g(M_t)=N_t-(p-1)\int_{0}^t\frac{g(M_s)-g^{\frac{p}{p-1}}(M_s)}{1-s}ds \ \ \forall t\in [0,1] \ \ \mathbb P \ \mbox{a.s.}
\end{equation}
Next, observe that
$M_0=\frac{a}{2}$ and define the function
$f:\left[\frac{a}{3},\frac{1+a}{2}\right]\rightarrow\mathbb R$ by
$$f(y)=-(p-1)\int_{\beta=a/3}^y \int_{\alpha=\frac{a}{3}}^\beta\frac{g(\alpha)-g^{\frac{p}{p-1}}(\alpha)}{h(\alpha)}d\alpha d\beta.$$
Notice that  $f\in C^2 \left[\frac{a}{3},\frac{1+a}{2}\right]$ and
$f''(y)=-(p-1)\frac{g(y)-g^{\frac{p}{p-1}}(y)}{h(y)}$, $y\in \left[\frac{a}{3},\frac{a+1}{2}\right]$.

For any $y\in \left(M_0,\frac{1+a}{2}\right)$ consider the stopping time
$\tau_y=\inf\{t: M_t\notin\left(\frac{a}{3},y\right)\}$. Clearly, $\tau_y<T$ a.s.

We notice that $\frac{d\langle M\rangle}{dt}=\frac{2 h(M_t)}{1-t}$, and so from the Ito Formula and (\ref{4.new}) we obtain
$$g(M_{\tau_y})-f(M_{\tau_y})=N_{\tau_y}-f(M_0)-\int_{0}^{\tau_y} f'(M_t) dM_t.$$
Hence,
\begin{equation}\label{4.92}
\mathbb E[g(M_{\tau_y})]-\mathbb E[f(M_{\tau_y})]=g(M_0)-f(M_0).
\end{equation}
Similarly, to (\ref{3.3-})
$$\mathbb P(M_{\tau_y}=y)=\frac{M_0-\frac{a}{3}}{y-\frac{a}{3}}
\  \mbox{and} \
\mathbb P\left(M_{\tau_y}=\frac{a}{3}\right)=\frac{y-M_0}{y-\frac{a}{3}} .$$
This together with (\ref{4.92}) yields that $g(y)-f(y)$ is a linear function on the interval $\left(M_0,\frac{1+a}{2}\right)$. In particular
$$g''(a)=f''(a)=-(p-1)\frac{g(a)-g^{\frac{p}{p-1}}(a)}{h(a)}.$$ Since $a\in (0,1)$ was arbitrary
we complete the proof of (\ref{ode}). \\

Finally, we prove minimality.
Assume that there exists a positive function $\hat g\neq g$ which satisfies
(\ref{ode}) and
(\ref{minimal}).
Define
the pair $(\hat Y,\hat Z)$ by
$$\hat Y_t:= \frac{\hat g(M^{1,0}_t)}{(1-t)^{p-1}}, \ \ \hat Z_t:=-\frac{\hat g'(M^{1,0}_t)e^{-\frac{W^2_t}{2(1-t)}}}{\sqrt{2\pi(1-t)^{2p-1}}}, \ \  t\in [0,1).$$
From the Ito formula ($\hat g$ satisfies (\ref{ode}) and so continuously differentiable)
it follows that the pair $(\hat Y,\hat Z)$
is a supersolution to
the BSDE (\ref{BSDE}) with the singular terminal condition $\hat Y_T=\infty \mathbb I_{W_1>0}$. From Corollary \ref{co} we conclude that
$\frac{\hat g(M^{1,0}_t)}{(1-t)^{p-1}}\geq \frac{g(M^{1,0}_t)}{(1-t)^{p-1}}$ a.s. for any $t\in [0,1)$. Thus,
$g(y)\geq\hat g(y)$ for all $y\in (0,1)$.

Let us argue strict inequality.
Indeed, assume by contradiction that there is $y_0\in (0,1)$ for which $\hat g(y_0)=g(y_0)$, then clearly $y_0$ is a minimum
point for the function $\hat g-g$. Hence, $\hat g'(y_0)=g'(y_0)$. Since $h(y)$ is bounded away from zero if $y$ is bounded away from the end points $\{0,1\}$, then
from standard uniqueness for initial value problems
we conclude that
$\hat g=g$ on the interval $(0,1)$. This is a contradiction and the proof of (I) is completed.
${}$\\
${}$\\
\textbf{Third step: Completion of the proof.}
\\
In this step we complete the proof of statements (II)--(III) in Theorem \ref{thm2.1}.
Since
$g$ is continuously differentiable (satisfies (\ref{ode})) then from the Ito formula,
Corollary \ref{co} and the first step of the proof we obtain statement (III).

It remains to prove Statement (II).
Let $(T,x,c)\in (0,\infty)\times [0,1]\times\mathbb R$ and let $\xi:=\infty \mathbb I_{W_T>c}$.
From
Theorem 3 in \cite{KP} and Corollary \ref{co} it follows that the optimal control for the optimization problem
$$\hat v(T,1-x,c)= \inf_{u\in \hat U^{-}(T,1-x,c)}\mathbb E \left[\int_{0}^T | u_t|^p dt+\xi |X^{x,u}_T|^p\right]$$
is given by
$$u_t=\frac{dX^{1-x, u}_t}{dt}=-(1-x)\frac{g^{\frac{1}{p-1}}(M^{T,c}_t)}{T-t}\exp\left(-\int_{0}^t \frac{g^{\frac{1}{p-1}}(M^{T,c}_s)}{T-s}ds\right), \ \ t\in [0,T].$$
From (\ref{lin})--(\ref{link}) we obtain that
$$\hat u_t:=-u_t=(1-x)\frac{g^{\frac{1}{p-1}}(M^{T,c}_t)}{T-t}\exp\left(-\int_{0}^t \frac{g^{\frac{1}{p-1}}(M^{T,c}_s)}{T-s}ds\right), \ \ t\in [0,T)$$
is the optimal control for the optimization problem (\ref{control}), as required.
\end{proof}
\section{The Exponential Case}\label{sec:5}
Let $\lambda>0$
and consider
the optimization problem
\begin{equation*}
w(T,x,c):=\inf_{u\in U(T,x,c)}\mathbb E\left[\int_{0}^T \left(e^{\lambda |u_t|}-1\right)dt\right].
\end{equation*}
Namely,
we consider a stochastic target problem with exponential
costs $z\rightarrow e^{\lambda |z|}-1$ and the same stochastic target as in (\ref{control}).

The following result says that for any $(T,x,c)$
the optimal control is targeting towards $1$ with a constant speed.
\begin{theorem}
Let $(T,x,c)\in (0,\infty)\times\mathbb R^2$. Then
\begin{equation*}
w(T,x,c)=T\left(e^{\frac{\lambda(1-x)^{+}}{T}}-1\right),
\end{equation*}
and the unique optimal control is given by
$u= \frac{{(1-x)}^{+}}{T}$ $dt\otimes\mathbb P$ a.s.
\end{theorem}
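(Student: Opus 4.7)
The plan has four steps: verify the upper bound using the candidate constant control; reduce to non-negative controls with terminal state in $[0,1]$; establish the key structural claim that any feasible control of finite cost forces $X^{x,u}_T=1$ almost surely; and deduce the lower bound by path-wise Jensen together with uniqueness from strict convexity.

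The upper bound is immediate: $\hat u\equiv (1-x)^{+}/T$ yields $X^{x,\hat u}_T=\max(x,1)\geq\mathbb I_{W_T>c}$, so $\hat u\in U(T,x,c)$, with cost exactly $T(e^{\lambda(1-x)^{+}/T}-1)$; for $x\geq 1$ the zero control already gives cost $0$, so assume $x<1$. The reduction repeats the proof of Lemma \ref{lem.optimal} essentially verbatim, using only that $z\mapsto e^{\lambda|z|}-1$ is convex, even and non-decreasing on $[0,\infty)$: given $u\in U(T,x,c)$, set $\hat u_t:=\max(u_t,0)$, $\theta:=T\wedge\inf\{t:X^{x,\hat u}_t=1\}$, $\tilde u_t:=\hat u_t\mathbb I_{t<\theta}$; then $\tilde u\in U(T,x,c)$, $\tilde u\geq 0$, $X^{x,\tilde u}_T\in[0,1]$, and the cost does not increase.

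Granting the key claim that any such feasible $u$ of finite cost satisfies $X_T^{x,u}=1$ a.s., path-wise Jensen on the convex function $z\mapsto e^{\lambda z}$ gives
\[
\int_{0}^{T}\bigl(e^{\lambda u_t}-1\bigr)\,dt \ \geq\ T\bigl(e^{\lambda(X_T^{x,u}-x)/T}-1\bigr)\ =\ T\bigl(e^{\lambda(1-x)/T}-1\bigr)\quad\text{a.s.,}
\]
whose expectation matches the upper bound. Uniqueness follows from the strict convexity of $z\mapsto e^{\lambda z}$: the path-wise Jensen step is strict unless $t\mapsto u_t(\omega)$ is constant for $\mathbb P$-a.e.\ $\omega$, and the only such feasible control with $X_T=1$ a.s.\ is $\hat u$.

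The main obstacle is the structural claim. The underlying picture is that $\mathbb P(W_T>c\,|\,\mathcal F^W_t)\in(0,1)$ a.s.\ for every $t<T$, so the controller can never rule out $\{W_T>c\}$ strictly before $T$ and must always preserve the option of reaching $X_T=1$; under the exponential penalty any last-minute hedging becomes prohibitively expensive, so conceding $X_T<1$ on a positive-measure subset of $\{W_T\leq c\}$ should force infinite cost. The quantitative version I envisage is by contradiction: one isolates via countable additivity a joint slice $E:=\{X_T\leq 1-\eta,\,W_T\in(c-r,c]\}$ of positive probability with $r^2$ small compared to $\lambda\eta$, and on a sub-event of $E$ with $W_{T-\delta}$ controlled below $c$ one balances the Gaussian probability $\mathbb P(W_T>c\,|\,\mathcal F^W_{T-\delta})\asymp\sqrt\delta\,e^{-r^2/(2\delta)}$ against the Jensen lower bound $\delta(e^{\lambda\eta/\delta}-1)$ forced by the emergency investment required on $\{W_T>c\}$, yielding a cost contribution of order $\delta^{3/2}e^{(\lambda\eta-r^2)/\delta}\to\infty$ as $\delta\downarrow 0$. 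Implementing this Gaussian-versus-exponential estimate robustly against an arbitrary feasible $u$---and in particular ensuring that the joint slice is non-degenerate---will be the delicate part of the proof.
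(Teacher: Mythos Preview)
Your approach is quite different from the paper's. The paper never establishes the structural claim that finite cost forces $X^{x,u}_T=1$ a.s.; instead it proves the lower bound \eqref{5.1} by Lagrangian duality. Concretely, for any bounded positive martingale $\mathbb M$ with $\mathbb M_0=1$ and any $\alpha>0$ one has the penalized lower bound, and after optimizing pointwise in the control and then in $\alpha$ the problem reduces to producing martingales with $\mathbb E[\mathbb M_T\mathbb I_{W_T>c}]$ close to $1$ while $\mathbb E\!\int_0^T \mathbb M_t\ln\mathbb M_t\,dt$ is close to $0$. These are built via Girsanov with a deterministic drift $\zeta^{(n)}$ concentrated near $T$, chosen so that $\int_0^T\zeta^{(n)}_t\,dt\to\infty$ but $\int_0^T(\zeta^{(n)}_t)^2(T-t)\,dt\to 0$. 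This sidesteps your structural claim entirely.

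Your direct route is appealing, but the sketch for the key claim has a real gap. Countable additivity does \emph{not} let you isolate a slice $E=\{X_T\le 1-\eta,\ W_T\in(c-r,c]\}$ with $r^2$ small compared to $\lambda\eta$: the sets $\{W_T\in(c-r,c]\}$ are nested and increasing in $r$, so all you get is that \emph{some} (possibly large) $r$ works. It is perfectly consistent with $\mathbb P(X_T\le 1-\eta)>0$ that $\{X_T\le 1-\eta\}\subset\{W_T\le c-r_0\}$ a.s.\ for some fixed $r_0>0$, in which case your Gaussian-versus-exponential balance reads $\lambda\eta$ against $r_0^2/2$ with no control on $r_0$. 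The same obstruction appears if you try to work at time $T-\delta$: one can bound $|W_{T-\delta}|$ on a sub-event of $\{X_{T-\delta}\le 1-\eta\}$ only by some $R$ depending on $\mathbb P(X_T\le 1-\eta)$ and $T$, again with no relation to $\eta$. Since the exponential cost and the Gaussian tail compete at exactly the same exponential scale, this is not a cosmetic issue; you would need an additional mechanism (e.g.\ an iteration that trades smaller $\eta$ for smaller $r$, or a stopping-time argument at visits of $W$ to $c$) to close the gap, and that is not provided. The paper's duality argument avoids this difficulty altogether.
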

\begin{proof}
Choose $(T,x,c)\times (0,\infty)\times \mathbb R^2$.
The statement is obvious for $x\geq 1$. Hence, without loss of generality we assume that $x<1$.
The cost function
is strictly convex, and so,
by using the same arguments as in Remark \ref{uni} we obtain that the optimal control is unique.
Thus, in order to prove the theorem it is
sufficient to show that the value function satisfies the inequality
\begin{equation}\label{5.1}
w(T,x,c)\geq T\left(e^{\frac{\lambda(1-x)}{T}}-1\right).
\end{equation}
Let $\mathcal C$ be the set of all adapted, continuous and uniformly bounded
processes.
Let $\mathcal M$ be the set of all strictly positive and uniformly bounded martingales
$\mathbb M=\{\mathbb M_t\}_{t=0}^T$ with $\mathbb M_0=1$.

Applying the standard technique of Lagrange multipliers we obtain
\begin{eqnarray*}
&w(T,x,c)\\
&\geq \inf_{C\in\mathcal C}\sup_{\alpha>0}\sup_{\mathbb M\in\mathcal M}
\mathbb E\left[\int_{0}^T \left(e^{\lambda |C_t|}-1\right) dt-\alpha\mathbb M_T\left(x+\int_{0}^T C_t dt-\mathbb I_{W_T>c}\right)\right]\\
&\geq \sup_{\alpha>0}\sup_{\mathbb M\in\mathcal M}\inf_{C\in\mathcal C} \mathbb E\left[\int_{0}^T \left(e^{\lambda |C_t|}-1\right) dt
-\alpha\mathbb M_T\left(x+\int_{0}^T C_t dt-\mathbb I_{W_T>c}\right)\right]\\
&\geq\sup_{\alpha>0}\sup_{\mathbb M\in\mathcal M}\inf_{C\in\mathcal C} \mathbb E\left[\int_{0}^T \left(e^{\lambda C_t}-1\right) dt-\alpha x-\alpha\int_{0}^T \mathbb M_t C_t dt+\alpha\mathbb{M}_T\mathbb I_{W_T>c}\right].\\
\end{eqnarray*}
Observe that for a given $\alpha>0$ and a martingale $\mathbb M$ the minimum of the above expression is obtained by taking
$C_t=\frac{\ln (\alpha\mathbb M_t/\lambda)}{\lambda}$, $t\in [0,T]$.
Hence,
\begin{eqnarray*}
&w(T,x,c)+T\\
&\geq\sup_{\alpha>0}\sup_{\mathbb M\in\mathcal M}\left[
\mathbb E\left(\alpha\mathbb{M}_T\mathbb I_{W_T>c}+\int_{0}^T\left(\frac{\alpha}{\lambda}\mathbb M_t-\frac{\alpha}{\lambda}\ln(\alpha/\lambda)\mathbb M_t-\frac{\alpha}{\lambda} \mathbb M_t\ln\mathbb M_t\right)dt\right)-\alpha x\right]\\
&=\sup_{\alpha>0}\sup_{\mathbb M\in\mathcal M}
\left[\alpha\mathbb E\left(\mathbb{M}_T\mathbb I_{W_T>c}-
\frac{1}{\lambda}\int_{0}^T \mathbb M_t\ln\mathbb M_tdt\right)+\frac{\alpha T}{\lambda}\left(1+\ln\lambda-\ln\alpha\right)-\alpha x\right].
\end{eqnarray*}
Clearly for a given $z_1\in\mathbb R$ and $z_2>0$ we have
$\max_{\alpha>0}[\alpha z_1-z_2\alpha\ln\alpha]=z_2e^{\frac{z_1}{z_2}-1}$.
We conclude that
$$w(T,x,c)+T\geq T e^{-\frac{\lambda x}{T}} \sup_{\mathbb M\in\mathcal M}\exp\left[\frac{1}{T}\mathbb E\left(\lambda\mathbb{M}_T\mathbb I_{W_T>c}-
\int_{0}^T \mathbb M_t\ln\mathbb M_tdt\right)\right]$$
and (\ref{5.1}) follows from the following lemma.
\end{proof}
\begin{lemma}
For any $\epsilon>0$ there exists $\mathbb M\in\mathcal M$ such that
\begin{equation}\label{5.2}
\mathbb E\left[\mathbb M_T\mathbb \mathbb I_{W_T>c}\right]>1-\epsilon
\end{equation}
and
\begin{equation}\label{5.3}
\mathbb E\left[\int_{0}^T
 \mathbb M_t\ln \mathbb M_t dt\right]<\epsilon.
 \end{equation}
 \end{lemma}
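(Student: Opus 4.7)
The idea is to keep $\mathbb{M}_t\equiv 1$ on a long initial segment $[0,T-\delta]$ and to place all the reshuffling of mass into the short terminal window $[T-\delta,T]$. This way the integrand in~(\ref{5.3}) vanishes on $[0,T-\delta]$, and whatever entropy is produced on $[T-\delta,T]$ is integrated over an interval of length only $\delta$.

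Fix small $\eta>0$, large $L>1$, and small $\delta\in(0,T)$, to be calibrated at the end. Let $T_0:=T-\delta$,
$$q:=\mathbb{P}(W_T>c\mid\mathcal{F}^W_{T_0})=\Phi\!\left(\tfrac{W_{T_0}-c}{\sqrt{\delta}}\right)\in(0,1)\ \text{a.s.},$$
and $A:=\{q\ge 1/L\}\in\mathcal{F}^W_{T_0}$. Set
$$\mathbb{M}_T:=\mathbb{I}_A\!\left(\tfrac{1-\eta(1-q)}{q}\mathbb{I}_{W_T>c}+\eta\mathbb{I}_{W_T\le c}\right)+\mathbb{I}_{A^c},$$
and for $t\in[0,T)$ take the continuous version of $\mathbb{M}_t:=\mathbb{E}[\mathbb{M}_T\mid\mathcal{F}^W_t]$. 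A direct conditioning shows $\mathbb{E}[\mathbb{M}_T\mid\mathcal{F}^W_{T_0}]=1$, hence $\mathbb{M}_t\equiv 1$ on $[0,T_0]$; since $\mathbb{M}_T\in[\eta,L]$ (for $\eta\le 1\le L$), this gives $\mathbb{M}\in\mathcal{M}$.

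Conditioning on $\mathcal{F}^W_{T_0}$ yields
$$\mathbb{E}[\mathbb{M}_T\mathbb{I}_{W_T>c}]=\mathbb{E}[\mathbb{I}_A(1-\eta(1-q))]+\mathbb{E}[\mathbb{I}_{A^c}q]\ge(1-\eta)\mathbb{P}(A),$$
which proves~(\ref{5.2}) once $\eta$ is small and $L$ is large (for fixed $\delta$, $q>0$ a.s.\ implies $\mathbb{P}(A)\uparrow 1$ as $L\uparrow\infty$). For~(\ref{5.3}) the integrand vanishes on $[0,T_0]$; on $[T_0,T]$ Jensen's inequality together with $\mathbb{M}_T\le L$ and $\mathbb{E}[\mathbb{M}_T]=1$ give $\mathbb{E}[\mathbb{M}_t\ln\mathbb{M}_t]\le\mathbb{E}[\mathbb{M}_T\ln\mathbb{M}_T]\le\ln L$, whence
$$\mathbb{E}\!\left[\int_{0}^{T}\mathbb{M}_t\ln\mathbb{M}_t\,dt\right]\le\delta\ln L.$$

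The hardest point, and the main obstacle, is the calibration of the three parameters. A Gaussian tail estimate shows that $\mathbb{P}(A)=\mathbb{P}(W_{T_0}\ge c-\sqrt{\delta}\,\Phi^{-1}(1-1/L))$ is close to $1$ only when $\sqrt{\delta\ln L}$ exceeds a constant depending on $c$, $T$ and $\ln(1/\epsilon)$, while~(\ref{5.3}) demands $\delta\ln L<\epsilon$. Reconciling these two requirements—likely by allowing $\mathbb{M}$ to drift slightly on $[0,T_0]$ rather than being identically $1$ there, so that $W_{T_0}$ is ``pre-shifted'' upward at low entropy cost before the targeting step—is the delicate technical heart of the argument.
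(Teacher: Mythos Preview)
Your construction is clean and the bookkeeping is correct up to the point you yourself flag: the calibration. The obstacle you identify is not a technicality but a genuine failure of this particular construction. With $\mathbb M\equiv 1$ on $[0,T_0]$ you need $\mathbb P(A)\to 1$, which forces $\sqrt{\delta\ln L}\gtrsim |c|+\sqrt{T\ln(1/\epsilon)}$ via the Gaussian tail, whereas your entropy bound $\delta\ln L<\epsilon$ forces $\sqrt{\delta\ln L}<\sqrt{\epsilon}$. These are incompatible, and sharper entropy estimates for your specific $\mathbb M_T$ do not rescue this: the dominant contribution to $\mathbb E[\mathbb M_T\ln\mathbb M_T]$ comes from paths with $W_{T_0}$ a bit below $c$, and a computation shows this contribution still scales like a positive power of $\delta\ln L$. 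So as written the proof is incomplete, and you say as much.

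The paper's argument avoids this trade-off by working with an exponential martingale $\mathbb M_t=\exp\bigl(\int_0^t\zeta_u\,dW_u-\tfrac12\int_0^t\zeta_u^2\,du\bigr)$ for a \emph{deterministic} $\zeta$. Girsanov then gives the closed forms
\[
\mathbb E[\mathbb M_T\mathbb I_{W_T>c}]=\Phi\!\Bigl(\tfrac{\int_0^T\zeta_t\,dt-c}{\sqrt T}\Bigr),\qquad
\mathbb E\Bigl[\int_0^T\mathbb M_t\ln\mathbb M_t\,dt\Bigr]=\tfrac12\int_0^T\zeta_t^2(T-t)\,dt,
\]
so the problem reduces to a purely analytic one: find $\zeta\ge 0$ with $\int_0^T\zeta_t\,dt$ large and $\int_0^T\zeta_t^2(T-t)\,dt$ small. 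The weight $(T-t)$ is exactly what your ``do everything near $T$'' intuition is trying to exploit, but here it appears explicitly, and one can take $\zeta_t$ with a mild singularity at $t=T$ (the paper uses $\zeta^{(n)}_t=n^{-2/3}(T+n^{-n}-t)^{-(1-1/n)}$) so that the first integral diverges while the second tends to $0$. A truncation by stopping then produces an element of $\mathcal M$. Your suggested fix of ``pre-shifting $W_{T_0}$ at low entropy cost'' is morally this same idea, but the Girsanov formulation makes the decoupling of the two integrals transparent and completes the argument.
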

 \begin{proof}
Choose $\epsilon>0$. First, assume that we found a strictly positive martingale $\mathbb M$ with $\mathbb M_0=1$ which satisfy (\ref{5.2})--(\ref{5.3}).
 Then for any $N\in\mathbb N$ define $\mathbb M^{(N)}\in\mathcal M$ by
 $\mathbb M^{(N)}_t:=\mathbb M_{t\wedge\sigma_N}$, $t\in [0,T]$ where
 $\sigma_N:=T\wedge\inf\{t: \mathbb M_t=N\}$. Clearly,
 $$\mathbb M^{(N)}_t=\mathbb E\left[\mathbb M_t|\mathcal F^W_{\sigma_N}\right], \ \ t\in [0,T].$$
  Thus, from the Jensen inequality for the function $z\rightarrow z\ln z$ and the Fubini theorem
 $$
\mathbb E\left[\int_{0}^T
 \mathbb M^{(N)}_t\ln \mathbb M^{(N)}_t dt\right]\leq \mathbb E\left[\int_{0}^T
 \mathbb M_t\ln \mathbb M_t dt\right]<\epsilon.$$
 Next,
 from the Fatou Lemma and the fact that $\sigma_N\uparrow T$ as $n\rightarrow\infty$
 $$\mathbb E\left[\mathbb M_T\mathbb \mathbb I_{W_T>c}\right]\leq\lim\inf_{N\rightarrow \infty}
 \mathbb E\left[\mathbb M^{(N)}_T\mathbb \mathbb I_{W_T>c}\right]. $$
 We conclude that in order to prove the statement, it is sufficient to find a strictly positive martingale which satisfy (\ref{5.2})--(\ref{5.3}).

To this end, consider a strictly positive martingale of the form
 $$\mathbb M_t:=e^{\int_{0}^t \zeta_u dW_u-\int_{0}^t\frac{1}{2}\zeta^2_u du}, \ \ t\in[0,T]$$
 where $\{\zeta_t\}_{t=0}^1$ is a continuous deterministic function. There exists a probability measure $\mathbb Q$ such that
 $\frac{d\mathbb Q}{d\mathbb P}|\mathcal F_t=\mathbb M_t.$ Moreover, from the Girsanov theorem
 the process
 $\tilde W_t:=W_t-\int_{0}^t \zeta_u du$, $t\in [0,T]$ is a Brownian motion under $\mathbb Q$. Thus,
\begin{equation}\label{5.100}
 \mathbb E\left[\mathbb M_T\mathbb \mathbb I_{W_T>c}\right]=\mathbb Q\left(W_T>c\right)=\mathbb Q\left(\tilde W_T+\int_{0}^T \zeta_t dt>c\right)
\end{equation}
and
\begin{eqnarray}\label{5.101}
   &\mathbb E\left[\int_{0}^T
 \mathbb M_t\ln \mathbb M_t dt\right]=\mathbb E_{\mathbb Q}\left[\int_{0}^T
 \ln \mathbb M_t dt\right]\nonumber\\
 &=\mathbb E_{\mathbb Q}\left[\int_{0}^T\left(\int_{0}^t\zeta_ud\tilde W_u+\frac{1}{2}\int_{0}^t \zeta^2_u du\right)dt \right]\nonumber\\
 &=\frac{1}{2}\left[\int_{0}^T\int_{0}^t \zeta^2_u du dt\right]=\frac{1}{2}\int_{0}^T \zeta^2_t(T-t) dt.
 \end{eqnarray}
Observe that for the sequence of continuous functions $\zeta^{(n)}:[0,T]\rightarrow\mathbb R$, $n\in\mathbb N$ given by
$$\zeta^{(n)}_t:=\frac{n^{-\frac{2}{3}}}{\left(T+\frac{1}{n^n}-t\right)^{1-\frac{1}{n}}}, \ \ t\in [0,T]$$
we have
$$\lim_{n\rightarrow \infty} \int_{0}^T \zeta^{(n)}_t dt\geq \lim_{n\rightarrow \infty} n^{\frac{1}{3}}\left(T^{\frac{1}{n}}-\frac{1}{n}\right)=\infty$$
and
$$\lim_{n\rightarrow \infty} \int_{0}^T [\zeta^{(n)}_t]^2(T-t) dt\leq \lim_{n\rightarrow \infty} n^{-\frac{4}{3}}\int_{0}^T \frac{dt}{(T-t)^{1-\frac{1}{n}}}=0.$$
This together with (\ref{5.100})--(\ref{5.101}) yields that for sufficiently large $n$ the martingale given by
$\mathbb M_t:=e^{\int_{0}^t \zeta^{(n)}_u dW_u-\frac{1}{2}[\zeta^{(n)}_u]^2 du}$, $t\in[0,T]$
satisfies (\ref{5.2})--(\ref{5.3}).

\end{proof}

\section{Numerical Results}
In this section we focus on the case of quadratic costs (i.e. $p=2$) and provide numerical results for the
value function and
simulations for
the optimal control.

From (\ref{main}) we have
$$g\left(\frac{1}{2}\right)=\inf_{u\in U(1,0,0)} \mathbb E \left[\int_{0}^1 u^2_t dt\right].$$
By approximating the Brownian motion with scaled random walks we compute numerically the right hand side of the above equality. The result is
$g\left(\frac{1}{2}\right)=0.88$. Then, we apply the shooting method and look for the correct value of the
derivative $g'\left(\frac{1}{2}\right)$. Namely we look for a real number $\gamma$ such that
the unique ($h\neq 0$ in the interval $(0,1)$) solution of the initial value problem
$$h(y)g''(y)+g(y)-g^2(y)=0, \ \ g\left(\frac{1}{2}\right)=0.88 \ \ \mbox{and} \ \ g'\left(\frac{1}{2}\right)=\gamma$$
will satisfy the boundary conditions $g(0)=1$ and $g(1)=0$. We get (numerically) a unique value $\gamma=-0.21$.
The result is illustrated in Figure 1.
\begin{figure}
\centering
\caption{A plot of the function $g$ for the case $p=2$.}
\includegraphics[width=0.9\textwidth]{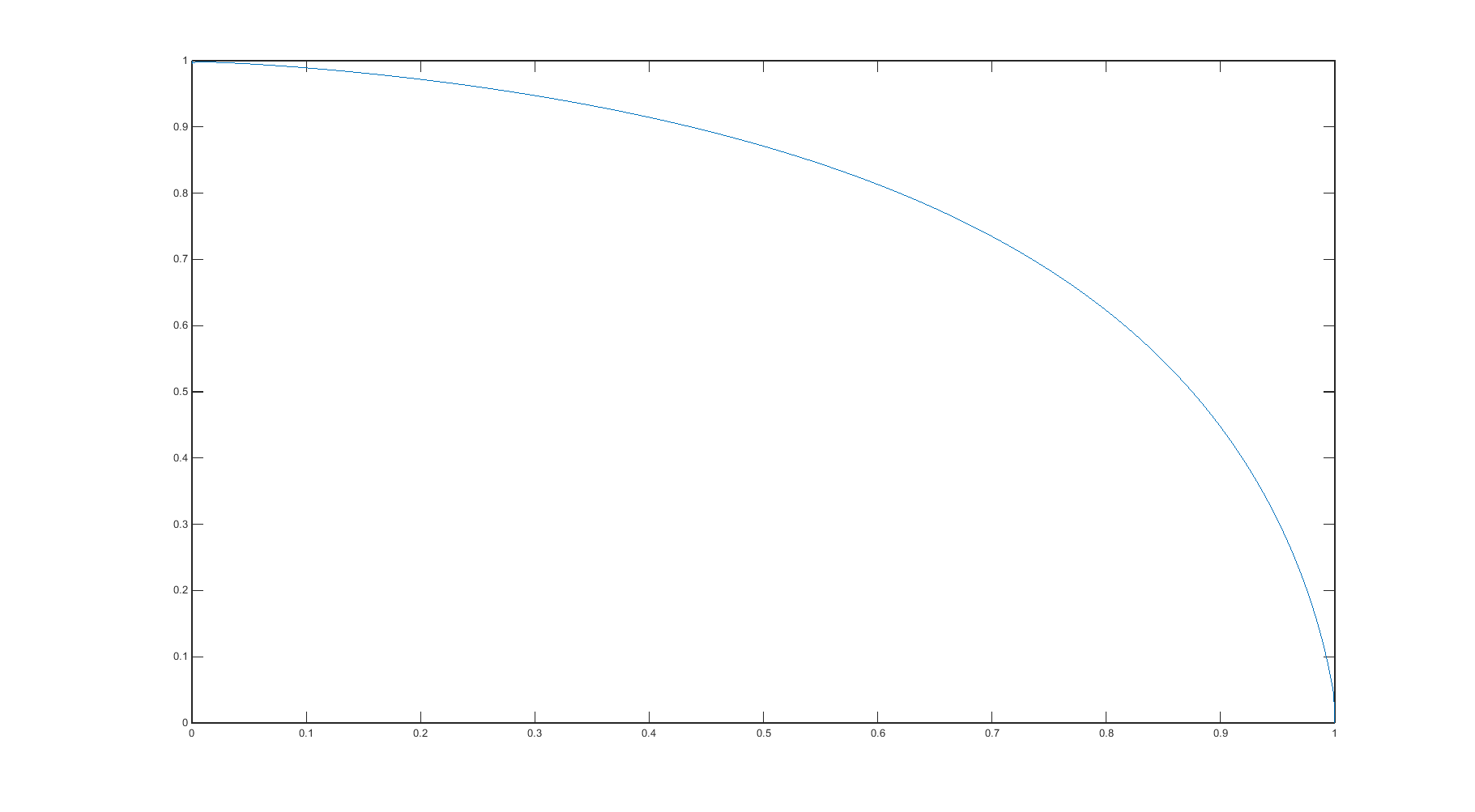}
\end{figure}

Next, for $T=1$ and $x=c=0$
we simulate a path of the optimal control $u\in U(1,0,0)$ and the corresponding strategy
$X^{0,u}_t=\int_{0}^t u_s ds$, $t\in [0,1]$.
This is done by simulating a Brownian path and applying Theorem \ref{thm2.1} (see Figures 2-3 below).
\begin{figure}
\centering
\includegraphics[width=0.9\textwidth]{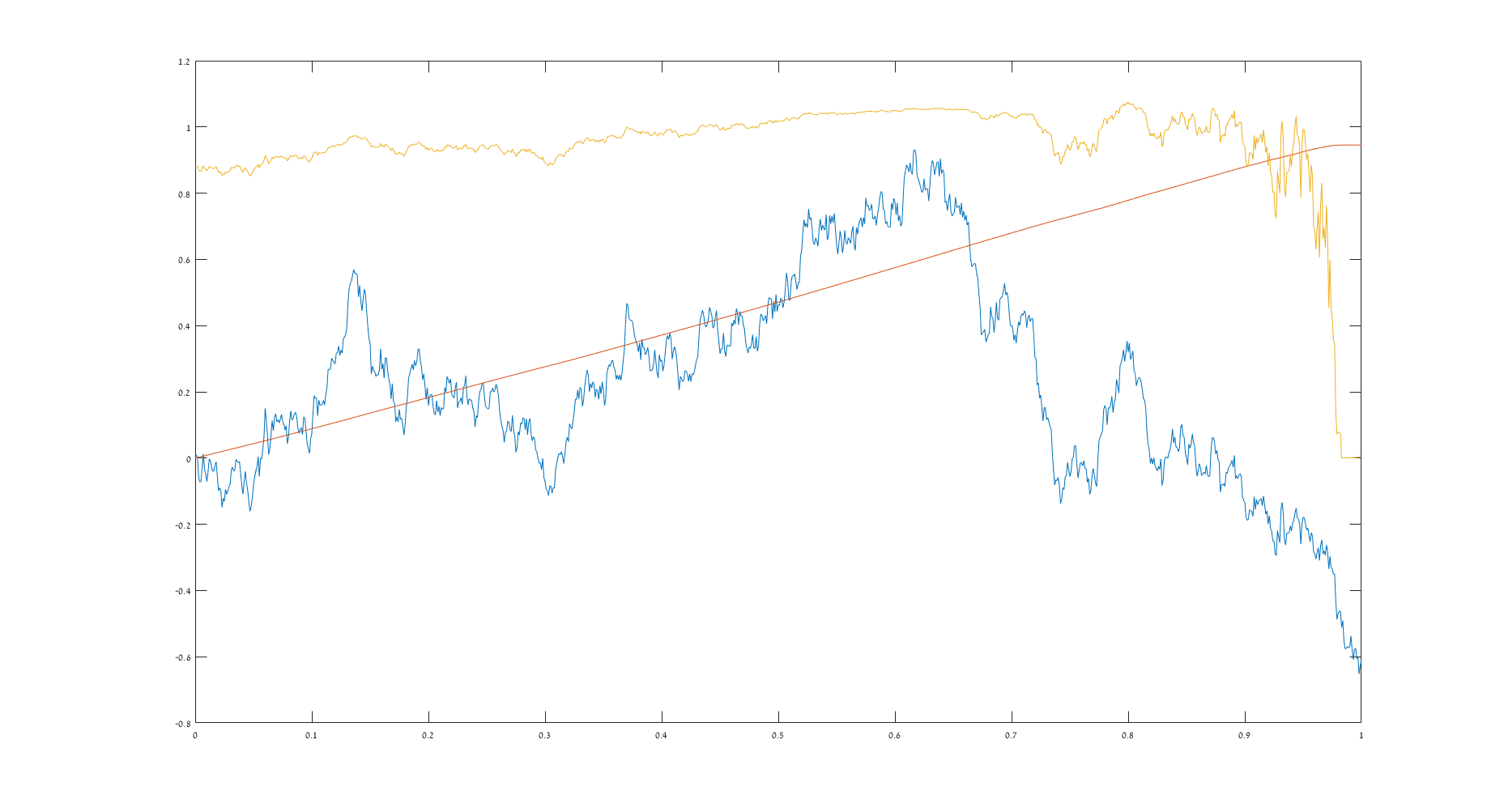}
\caption{This simulation corresponds to the case where $W_1<0$. The Brownian path is in blue, the optimal control
$u\in U(1,0,0)$ is in orange and
$X^{0,u}_t=\int_{0}^t u_s ds$ is in red.}
\end{figure}
\begin{figure}
\centering
\includegraphics[width=0.9\textwidth]{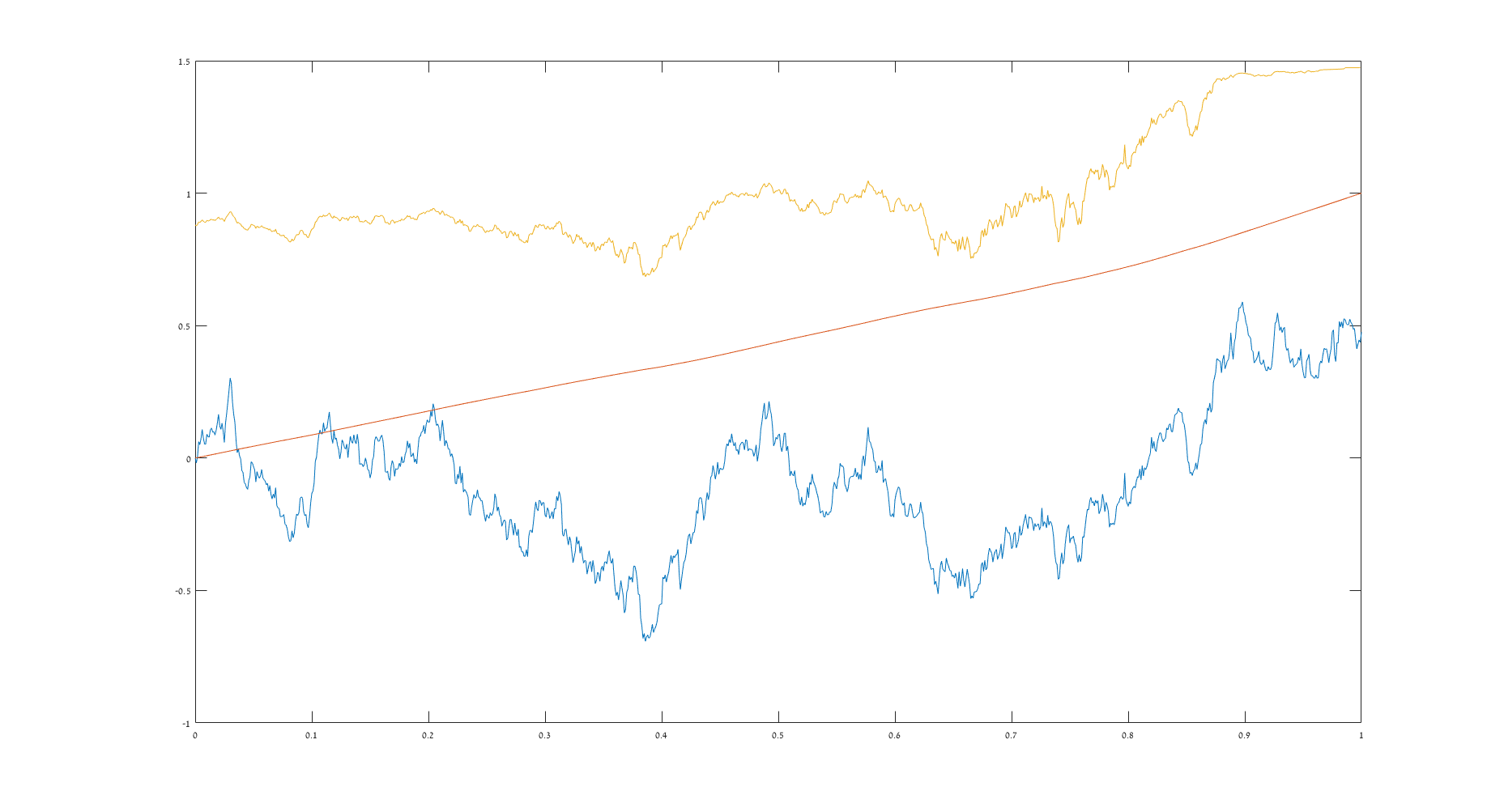}
\caption{This simulation corresponds to the case where $W_1>0$. The Brownian path is in blue, the optimal control
$u\in U(1,0,0)$ is in orange and
$X^{0,u}_t=\int_{0}^t u_s ds$ is in red.}
\end{figure}

\section*{Acknowledgments}
We	
would like to thank the anonymous reviewers for their suggestions and comments which improved the paper.
We also
would like to thank
Peter Bank, Asaf Cohen and Ross Pinsky for valuable discussions.
This research was partially
supported by the ISF grant no 160/17 and the ISF grant no 1707/16.
\bibliographystyle{spbasic}

\end{document}